\newcommand{\ignore}[1]{}
\theoremstyle{definition}
\theoremstyle{remark}
\numberwithin{equation}{section}
\newcommand{\Hom}{\mbox{\rm{Hom}} }
\newcommand{\fM}{{\mathfrak m}}
\newcommand{\cC}{{\mathcal C}}
\newcommand{\lra}{{\longrightarrow}}
\theoremstyle{plain}
  \newtheorem{thm}{Theorem}
  \newtheorem{lem}[thm]{Lemma}
  \newtheorem{cor}[thm]{Corollary}
\theoremstyle{definition}
  \newtheorem{exmp}[thm]{Example}
  \newtheorem{rem}[thm]{Remark}
\let\opn\operatorname 
\newcommand\chara{\opn{char}}
\newcommand\supp{\opn{supp}}
\newcommand\supptw{\supp_{\mathbf 2}}
\newcommand\cost{\opn{cost}}
\def\m{\mathsf{m}}
\def\n{\mathsf{n}}
\def\cC{\mathcal {C}}
\def\bx{\mathbf {x}}
\def\one{\mathbf {1}}
\def\RR{\mathbb R}
\def\NN{\mathbb N}
\begin{document}

\title[Addendum to "Frobenius and Cartier algebras of Stanley-Reisner rings"]{Addendum to "Frobenius and Cartier algebras of Stanley-Reisner rings" [J. Algebra  358 (2012) 162-177]}

\author[J. \`Alvarez Montaner]{Josep \`Alvarez Montaner}
\thanks{The first author is partially supported by SGR2009-1284 and MTM2010-20279-C02-01}
\address{Dept. Matem\`atica Aplicada I\\
Univ. Polit\`ecnica de Catalunya\\ Av. Diagonal 647,
Barcelona 08028, SPAIN} \email{Josep.Alvarez@upc.edu}

\author[K. Yanagawa]{Kohji Yanagawa}
\thanks{The second author is supported by JSPS KAKENHI Grant Number 22540057}
\address{Department of Mathematics, Kansai University, Suita 564-8680, Japan}
\email{yanagawa@ipcku.kansai-u.ac.jp}

\keywords{Stanley-Reisner rings, Cartier algebras}
\subjclass[2010]{Primary 13A35; Secondary 05E40, 14G17}

\begin{abstract}
We give a purely combinatorial characterization of complete Stanley-Reisner rings having a principally (equivalently, finitely generated) generated Cartier algebra.
\end{abstract}


\maketitle

\section{Introduction}
Let $(R, \fM)$ be a complete local ring of prime characteristic $p>0$. The notion of Cartier algebra, introduced
by K.~Schwede \cite{Sch11} and developed by M.~Blickle \cite{Bli13}, has received a lot of attention
due to its role in the study of test ideals. More precisely, the ring of Cartier operators on $R$ is the
graded, associative, not necessarily commutative  ring
$$\cC(R):=\bigoplus_{e\geq 0}\Hom_R(F_{\ast}^eR,R),$$ where $F^e_{\ast}R$ denotes the ring $R$ with the
left $R$-module structure given by the $e$-th iterated Frobenius map $F^e:R\lra R$, i.e.  the left
$R$-module structure given by $r\cdot m:=r^{p^e}m$.
One should mention that, using Matlis duality, the Cartier algebra of $R$ corresponds to the Frobenius algebra of the
injective hull of the residue field $E_R(R/\fM)$ introduced by G.~Lyubeznik and K.~E.~Smith in \cite{LS01}.

\vskip 2mm

Let $S=K[\![x_1, \ldots, x_n]\!]$ be the formal power series ring
over a field $K$. In this note we will assume that $\chara(K)=p>0$.
Given a simplicial complex $\Delta$ with vertex set $[n]:= \{1,2, \ldots, n\}$ one may associate
a squarefree monomial ideal $I_\Delta:=( \prod_{i \in F} x_i \mid F \subset [n], F \not \in \Delta)$ in $S$
via the  {\it Stanley-Reisner correspondence}.
Building upon an example of M.~Katzman \cite{Kat10}, the first author together with A.~F.~Boix and
S.~Zarzuela \cite{ABZ12} studied Cartier algebras of 
{\it complete Stanley-Reisner rings $R:=S/I_\Delta$} associated to $\Delta$.
One of the main results
obtained in \cite{ABZ12} is that these Cartier algebras can be either principally generated or infinitely generated as an $R$-algebra.

\begin{thm}[{\cite[Theorem~3.5]{ABZ12}}]\label{ABZ main}
With the above notation,  set $R:=S/I_\Delta$.
Assume that each $x_i$ divides some minimal monomial generator of $I_\Delta$. 
Then, the following are equivalent:

\begin{itemize}
 \item [(1)] The Cartier algebra $\cC(R)$ is principally generated.

 \item [(2)] $I_{\Delta}^{[2]}:I_\Delta=I_\Delta^{[2]}+ (\bx^\one)$.

\end{itemize}

\noindent Otherwise the Cartier algebra $\cC(R)$ is infinitely generated.
Here $I_{\Delta}^{[2]}$ denotes the second Frobenius power of $I_{\Delta}$ and $\bx^\one:= x_1x_2 \cdots x_n$.

\end{thm}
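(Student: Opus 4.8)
The plan is to convert the generation problem for $\cC(R)$ into a question about the colon ideals $(I_\Delta^{[p^e]}:I_\Delta)$, and then to show that the degree-one information in condition (2) propagates to all Frobenius powers. First I would set up the standard dictionary: since $S$ is regular, $\Hom_S(F^e_{\ast}S,S)$ is free of rank one over $F^e_{\ast}S$, generated by the top Cartier operator, and descending to $R=S/I_\Delta$ gives an isomorphism of $R$-modules
$$\cC_e:=\Hom_R(F^e_{\ast}R,R)\cong (I_\Delta^{[p^e]}:I_\Delta)/I_\Delta^{[p^e]}.$$
Under this dictionary the algebra multiplication $\cC_{e}\times\cC_{e'}\to\cC_{e+e'}$ is induced by $(u,v)\mapsto u\,v^{[p^e]}$, where $u\in(I_\Delta^{[p^e]}:I_\Delta)$ and $v\in(I_\Delta^{[p^{e'}]}:I_\Delta)$; a direct check shows $u\,v^{[p^e]}\in(I_\Delta^{[p^{e+e'}]}:I_\Delta)$. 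Thus $\cC(R)$ is generated as an $R$-algebra precisely by the classes coming from the ideals $(I_\Delta^{[p^e]}:I_\Delta)$, with the Frobenius-power operation governing how low-degree operators assemble into high-degree ones.

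Next I would isolate the one operator always present. For every minimal generator $\bx^F$ of $I_\Delta$ the product $\bx^{(p-1)\one}\cdot\bx^F$ is divisible by $(\bx^F)^{[p]}$, hence lies in $I_\Delta^{[p]}$; therefore $\bx^{(p-1)\one}\in(I_\Delta^{[p]}:I_\Delta)$, and it is visibly nonzero modulo $I_\Delta^{[p]}$. Its class $u_0\in\cC_1$ is the standard Cartier operator, and the multiplication rule gives $u_0^e=$ class of $\bx^{(p^e-1)\one}$. Condition (2), with $I_\Delta^{[2]}$ and $\bx^\one$ understood as the first Frobenius power $I_\Delta^{[p]}$ and the operator $\bx^{(p-1)\one}$ (these agree with the printed expressions when $p=2$), says exactly that $\cC_1=R\cdot u_0$. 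For the implication $(1)\Rightarrow(2)$ I would argue by degree: a singly generated graded algebra is generated by one homogeneous element, and since $\cC_1\neq 0$ this element lies in degree one; because $u_0$ spans the top $\ZZ^n$-graded piece (multidegree $(p-1)\one$) of the Artinian module $\cC_1$ — and here the hypothesis that every $x_i$ divides some minimal generator is used to bound the multidegrees — a cyclic $\cC_1$ must be generated by $u_0$, i.e.\ $\cC_1=R\cdot u_0$, which is (2).

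For the converse $(2)\Rightarrow(1)$ the content is to promote the degree-one equality to
$$(I_\Delta^{[p^e]}:I_\Delta)=I_\Delta^{[p^e]}+(\bx^{(p^e-1)\one})\qquad\text{for all }e,$$
for this says $\cC_e=R\cdot u_0^e$, so that $\cC(R)$ is principally generated by $u_0$. Here I would use the minimal primary decomposition $I_\Delta=\bigcap_{F}P_{[n]\setminus F}$ over the facets $F$ of $\Delta$, which is preserved by Frobenius powers, $I_\Delta^{[q]}=\bigcap_F P_{[n]\setminus F}^{[q]}$, and reduces the colon to the completely explicit monomial ideals $(P_{[n]\setminus F}^{[q]}:\bx^G)$ indexed by facets $F$ and minimal non-faces $G$. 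Membership then becomes a finite system of inequalities among exponent vectors that scale linearly in $q=p^e$; the point is that the equality (2) for $q=p$ forces each of these numerical conditions, and, being scale-invariant, they persist for all $q=p^e$, yielding the displayed equality and hence principal generation.

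The main obstacle is exactly this propagation together with its contrapositive, which simultaneously produces the final assertion. When (2) fails there is a monomial $v\in(I_\Delta^{[p]}:I_\Delta)$ lying outside $I_\Delta^{[p]}+(\bx^{(p-1)\one})$, and one must show that its Frobenius powers and products manufacture a genuinely new $R$-algebra generator in infinitely many degrees $\cC_e$, rather than being eventually absorbed by products of lower-degree operators. Controlling this requires tracking the explicit exponent conditions above and verifying that a degree-one ``defect'' cannot be cancelled after raising to Frobenius powers; this is the delicate step, and it is precisely where the combinatorics of $\Delta$ — the theme of the present addendum — enters decisively.
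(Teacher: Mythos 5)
Note first that the paper does not prove this statement at all: it is imported verbatim from \cite[Theorem~3.5]{ABZ12}, and the addendum's own contribution (Theorem~\ref{main this time}) takes it as input. So your attempt must be judged as a reconstruction of the argument in \cite{ABZ12}. Much of your plan is sound: the Fedder-type dictionary $\cC_e\cong(I_\Delta^{[p^e]}:I_\Delta)/I_\Delta^{[p^e]}$ with multiplication $(u,v)\mapsto u\,v^{[p^e]}$ is correct; your $(1)\Rightarrow(2)$ is essentially right, although your claim that $\cC_1$ is Artinian is false in general (in Katzman's example $I=(xy,yz)\subset K[\![x,y,z]\!]$ the class of $xy^2z$ generates a submodule isomorphic to $S/(x,z)$); what you actually need is graded Nakayama to replace the cyclic generator by a monomial class $v$, then divisibility $v\mid \bx^\one$, and then the hypothesis that each $x_i$ divides a minimal generator together with the minimal-non-face property of that generator to force $v=\bx^\one$. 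Your scaling argument for $(2)\Rightarrow(1)$ is also correct and completable: writing $(I_\Delta^{[q]}:I_\Delta)=\bigcap_{j,G}(P_j^{[q]}:\bx^G)$ over the facet primes $P_j$ and minimal non-faces $G$, the colon is generated by monomials whose exponents lie in $\{0,q-1,q\}$, and membership of such ``pattern'' monomials is independent of $q\ge 2$; hence the $q=2$ equality propagates to all $q=p^e$, giving $\cC_e=R\cdot u_0^e$.

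The genuine gap is the theorem's final assertion: \emph{otherwise $\cC(R)$ is infinitely generated}. This dichotomy (the algebra is never finitely-but-not-principally generated) is the main content of \cite[Theorem~3.5]{ABZ12}, and your proposal explicitly defers it (``this is the delicate step'') rather than proving it. Concretely, what is missing is this: if (2) fails, there is a pattern $\epsilon\in\{0,1,2\}^n$ with $\epsilon\ne\one$ and $\epsilon\not\ge 2\cdot 1_G$ for every minimal non-face $G$, whose associated monomial $m_q(\epsilon)$ (exponent $q-1$ where $\epsilon_i=1$, exponent $q$ where $\epsilon_i=2$) lies in $(I_\Delta^{[q]}:I_\Delta)$ for every $q=p^e$; you must then show that $m_{p^e}(\epsilon)$ does not lie in $I_\Delta^{[p^e]}+\sum_{0<e'<e}(I_\Delta^{[p^{e'}]}:I_\Delta)\cdot\bigl((I_\Delta^{[p^{e-e'}]}:I_\Delta)\bigr)^{[p^{e'}]}$, which is exactly the degree-$e$ part of the subalgebra generated by all lower degrees. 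This requires the exponent arithmetic you allude to but never carry out: for instance, realizing the exponent $p^e-1$ as $a+p^{e'}b$ with $a\in\{0,p^{e'}-1,p^{e'}\}$ and $b\in\{0,p^{e-e'}-1,p^{e-e'}\}$ forces $a=p^{e'}-1$ and $b=p^{e-e'}-1$, and similarly for the values $0$ and $p^e$, so a product of lower-degree pattern monomials is again a pattern monomial whose pattern is determined by rigid componentwise rules; one must then verify that the ``defect'' pattern $\epsilon$ can never arise this way, so that a new generator is needed in \emph{every} degree $e$. Without this step you have only proved $(1)\Leftrightarrow(2)$, not the dichotomy that makes the theorem striking.
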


\begin{rem} \label{remark}
Set $V:=\{ \, i \mid \text{$x_i$ divides some minimal monomial generator of $I_\Delta$} \, \}$.

\begin{itemize}
 \item [(i)]
The condition that {\it each $x_i$ divides some minimal monomial generator of $I_\Delta$} (equivalently, $V=[n]$) is only used to simplify
the notations of Theorem \ref{ABZ main}. If it is not satisfied (equivalently,  $\Delta$ is a cone over some vertex), 
we have that $\cC(R)$ is principally generated if and
only if $I_{\Delta}^{[2]}:I_\Delta=I_\Delta^{[2]}+ (\prod_{i \in V}x_i)$.
We can always reduce to the case $V=[n]$ since there is always a simplicial complex $\Delta'$ on $V$ such that
$\Delta$  coincides with the simplicial join $\Delta' * 2^{[n] \setminus V} := 
\{ \, F \cup G \mid F \in \Delta',  \, G \subset [n] \setminus V \, \}$,  
so the result follows from Lemma \ref{join} below.

\vskip 2mm

\item [(ii)] The original result in \cite{ABZ12} has a slightly different formulation in terms of the colon ideals
$I_{\Delta}^{[p^e]}:I_\Delta$, $e\geq 1$,  but it was already noticed in \cite[Remark~3.1.2]{ABZ12} that one may reduce to the case
$p=2$ and $e=1$. We also point out that Theorem \ref{ABZ main}
also holds in the case $\opn{ht}(I_\Delta)=1$ that was treated separately in \cite{ABZ12} for clearness.
\end{itemize}

\end{rem}

\begin{lem} \label{join}
Let  $\Delta$ be a simplicial complex with vertex set $[n]$. Assume that there exists
a simplicial complex $\Delta'$ on $V \subseteq [n]$ such that $\Delta = \Delta' * 2^{[n] \setminus V}$.
Then, $\cC(S/I_\Delta)$ is principally generated if and only if so is $\cC(S'/I_{\Delta'})$, where $S':=K[\![x_i \mid i \in V]\!]$.
\end{lem}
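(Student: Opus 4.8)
The plan is to turn the combinatorial join into the algebraic operation of adjoining formal power series variables, and then to compare the two Cartier algebras directly, degree by degree.

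\emph{Step 1 (combinatorics).} Since $\Delta = \Delta' * 2^{[n]\setminus V}$, writing any $N\subseteq[n]$ as $N=(N\cap V)\cup(N\cap([n]\setminus V))$ shows that $N\notin\Delta$ if and only if $N\cap V\notin\Delta'$; minimality then forces $N\subseteq V$, so the minimal nonfaces of $\Delta$ are exactly those of $\Delta'$. Hence $I_\Delta$ and $I_{\Delta'}$ have the same minimal monomial generators, and $I_\Delta=I_{\Delta'}S$ is simply the extension of $I_{\Delta'}$ along the faithfully flat inclusion $S'\hookrightarrow S$. Writing $T:=[n]\setminus V$, this gives $R\cong R'[\![x_j\mid j\in T]\!]\cong R'\,\widehat{\otimes}_K\,K[\![x_j\mid j\in T]\!]$. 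By induction on $|T|$ I may assume $T=\{t\}$ is a single vertex, so that $R=R'[\![x_t]\!]$ and $x_t$ occurs in no generator of $I_\Delta$.

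\emph{Step 2 (transporting the Fedder description).} I would use the identification $\cC^e(S/I)\cong (I^{[p^e]}:I)/I^{[p^e]}$ with its Frobenius-twisted product, as in \cite{ABZ12}, under which principal generation means that $\cC(R)$ is generated over $\cC^0(R)=R$ by a single operator of degree $1$. Because $S'\hookrightarrow S$ is flat and $x_t$ avoids the generators of $I_\Delta$, both Frobenius powers and colon ideals commute with the extension, so $I_\Delta^{[p^e]}=I_{\Delta'}^{[p^e]}S$ and $(I_\Delta^{[p^e]}:I_\Delta)=(I_{\Delta'}^{[p^e]}:I_{\Delta'})S$. Tensoring the resulting short exact sequences with the flat factor $K[\![x_t]\!]$ yields, for every $e$, an isomorphism of $R$-modules
\[
\cC^e(R)\;\cong\;\cC^e(R')\,\widehat{\otimes}_K\,\cC^e(K[\![x_t]\!]),
\]
and the essential point to verify is that this is an isomorphism of graded algebras for the twisted multiplication.

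\emph{Step 3 (the extra factor is harmless).} Since $K[\![x_t]\!]$ is regular, its Cartier algebra is principally generated, say by the standard degree-$1$ Cartier operator $\Phi$, so every $\cC^e(K[\![x_t]\!])$ is obtained from $e$-fold products of $\Phi$. If $\psi$ principally generates $\cC(R')$, I expect $\psi\,\widehat{\otimes}\,\Phi$ to principally generate $\cC(R)$, its $e$-th power being $\psi^e\,\widehat{\otimes}\,\Phi^e$, so that the two diagonal systems of generators combine. For the converse I would use the $K$-algebra retraction $R\to R'$ sending $x_t\mapsto 0$: setting $x_t=0$ recovers $(I_{\Delta'}^{[p^e]}:I_{\Delta'})$ from $(I_\Delta^{[p^e]}:I_\Delta)$, so that degree-$1$ generation of $\cC(R)$ descends to degree-$1$ generation of $\cC(R')$.

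The hard part is Step 2, namely checking that the module-level K\"unneth isomorphism is compatible with the noncommutative, Frobenius-twisted multiplication, together with the claim in Step 3 that tensoring with the principally generated factor $\cC(K[\![x_t]\!])$ introduces no new algebra generators in higher Frobenius degrees. Both reduce to bookkeeping with the standard generator and the composition rule $\Phi^e\circ F^e_*\Phi^{e'}=\Phi^{e+e'}$, but this is where all the care is needed.
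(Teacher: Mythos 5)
Your proposal is correct and takes essentially the same route as the paper: the paper's own proof is exactly the observation that $S/I_\Delta \cong (S'/I_{\Delta'})[\![x_i \mid i \notin V]\!]$ (your Step 1) followed by an appeal to the colon-ideal description $\cC^e(S/I) \cong (I^{[p^e]}:I)/I^{[p^e]}$ from \cite{ABZ12}. Your Steps 2--3 simply make explicit the transfer of principal generation that the paper leaves to the reader (flat extension of Frobenius powers and colon ideals in one direction, the retraction $x_t \mapsto 0$ in the other), and those verifications do go through.
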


\begin{proof}
We have $S/I_\Delta \cong (S' /I_{\Delta'}) 
[\![x_i \mid i \not \in V]\!]$. Then the result follows from the description
of the Cartier algebra in terms of the colon ideals
$I_{\Delta}^{[2]}:I_\Delta$ (see \cite{ABZ12} and the references therein).
\end{proof}

\section{A characterization of principally generated Cartier algebras}
The Cartier algebra of an $F$-finite complete Gorenstein local ring $R$ is principally generated
as a consequence\footnote{Using Matlis duality.} of \cite[Example 3.7]{LS01}.
The converse holds true for $F$-finite normal rings (see \cite{Bli13}). Complete Stanley-Reisner rings
are $F$-finite but, almost always non-normal and when discussing examples at the boundary of the Gorenstein property
one can even find examples of principally generated Cartier algebras that are not even Cohen-Macaulay.
The authors of \cite{ABZ12} could not find the homological conditions that tackle this property so
the aim of this note is to address this issue. Our main result is a very simple  combinatorial criterion
in terms of the simplicial complex $\Delta$. To this purpose we recall
that a  {\it facet} of $\Delta$ is a maximal face with respect to inclusion.
We say $F \in \Delta$ is a {\it free face} if $F \cup \{ i\}$ is a facet  for some $i \not \in F$ 
and $F \cup \{ i\}$ is  the unique facet containing $F$. 
With the above situation, the process of passing from $\Delta$ to $\Delta \setminus \{ F, F \cup \{ i \}  \}$ preserves 
the homotopy type,  and is called an {\it elementary collapse}.  See, for example, \cite[Definition~3.3.1]{M80}. 

\begin{thm}\label{main this time}
Under the same assumptions as in Theorem~\ref{ABZ main}, the following are equivalent.
\begin{itemize}
\item[(a)] The Cartier algebra $\cC(R)$ is principally generated.

\item[(b)] $\Delta$ does not have a free face, that is, we can not apply an elementary collapse to $\Delta$.
\end{itemize}
\end{thm}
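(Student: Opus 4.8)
The plan is to invoke Theorem~\ref{ABZ main}, which already identifies condition~(a) with the colon-ideal identity $I_\Delta^{[2]}:I_\Delta = I_\Delta^{[2]} + (\bx^\one)$ appearing as its condition~(2). Thus it suffices to prove that this algebraic identity is equivalent to the combinatorial statement~(b) that $\Delta$ has no free face. Since all ideals involved are monomial, the inclusion $I_\Delta^{[2]} + (\bx^\one) \subseteq I_\Delta^{[2]}:I_\Delta$ holds automatically (the generator $\bx^\one$ times any $\prod_{i\in F}x_i$ is divisible by $\prod_{i\in F}x_i^2$), so the whole content is the reverse inclusion: condition~(2) fails precisely when some monomial lies in $I_\Delta^{[2]}:I_\Delta$ but not in $I_\Delta^{[2]} + (\bx^\one)$. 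The first step is therefore to describe such a witness monomial combinatorially.

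First I would record that a monomial $\bx^{\mathbf a}$ lies in $I_\Delta^{[2]}:I_\Delta$ iff for every minimal non-face $F$ (i.e.\ every minimal generator $\prod_{i\in F}x_i$ of $I_\Delta$) the product $\bx^{\mathbf a}\prod_{i \in F} x_i$ is divisible by some $\prod_{i\in G}x_i^2$ with $G$ a minimal non-face. Because divisibility by the squares and by $\bx^\one$ only sees whether exponents are $\ge 2$ or $\ge 1$, one checks that replacing each exponent $a_i$ by $\min(a_i,2)$ changes none of the three membership conditions; hence a witness may be taken with exponents in $\{0,1,2\}$. Writing $B=\{i:a_i=2\}$ and $C=\{i:a_i=1\}$, the three conditions become: $B$ contains no minimal non-face (i.e.\ $B\in\Delta$); for every minimal non-face $F$ some minimal non-face is contained in $B\cup(C\cap F)$; and $B\cup C\ne[n]$. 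Enlarging $C$ only helps the middle condition and never hurts the others, so one may take $C=[n]\setminus B\setminus\{j\}$ for a single leftover vertex $j\notin B$. After simplifying $B\cup(C\cap F)=(B\cup F)\setminus\{j\}$ and discarding the $F$ with $j\notin F$ (for which the condition is automatic), the existence of a witness becomes: there is a face $B\in\Delta$ and a vertex $j\notin B$ such that $B\cup(F\setminus\{j\})$ is a non-face for every minimal non-face $F\ni j$.

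It remains to match this last condition with the presence of a free face. One useful preliminary observation is that the witness condition forces $B\cup\{j\}\in\Delta$: if $B\cup\{j\}$ contained a minimal non-face $F$, then $j\in F$ and $F\setminus\{j\}\subseteq B$, so $B\cup(F\setminus\{j\})=B$ would be a face, contradicting the witness property. For the direction (b)$\Rightarrow$(a) I would start from a free face $F_0$ with $F_0\cup\{i\}$ the unique facet containing $F_0$ and verify directly that $(B,j)=(F_0,i)$ is a witness: for a minimal non-face $F\ni i$ one has $F\not\subseteq F_0\cup\{i\}$ (else $F$ would be a face), so some $k\in (F\setminus\{i\})\setminus F_0$ exists, and then $F_0\cup\{k\}$ is a non-face contained in $F_0\cup(F\setminus\{i\})$. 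For the converse I would use the monotonicity of the witness condition in $B$: if $(B,j)$ is a witness then so is $(B',j)$ for any face $B'\supseteq B$ avoiding $j$. Choosing $B$ maximal among faces that contain the original witness set and avoid $j$, maximality gives $B\cup\{k\}\notin\Delta$ for all $k\notin B\cup\{j\}$, while the preliminary observation gives $B\cup\{j\}\in\Delta$; these are exactly the conditions making $B\cup\{j\}$ the unique facet containing $B$, so $B$ is a free face.

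The bulk of the routine work, and the step most prone to indicator-function errors, is the translation in the second paragraph from the colon ideal to the clean pair $(B,j)$; the reduction to exponents in $\{0,1,2\}$ and the enlargement of $C$ must be justified carefully. The genuinely illuminating step, and the one I expect to be the crux, is recognizing that enlarging $B$ to a maximal face avoiding $j$ converts an arbitrary algebraic witness into the exact local picture of a free face --- once this monotonicity is in hand, both implications are short.
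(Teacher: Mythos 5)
Your proposal is correct and follows the same overall route as the paper: reduce via Theorem~\ref{ABZ main} to the equivalence of $I_\Delta^{[2]}:I_\Delta=I_\Delta^{[2]}+(\bx^\one)$ with the absence of a free face, and, for the implication ``free face implies the identity fails,'' use exactly the same witness (your pair $(B,j)=(F_0,i)$ with $C=[n]\setminus B\setminus\{j\}$ is precisely the paper's monomial $\m=\prod_{i=1}^{l}x_i^2\cdot\prod_{i=l+2}^{n}x_i$ when the free face is $\{1,\dots,l\}$ with unique facet $\{1,\dots,l+1\}$). Where you genuinely diverge is in the organization of the other implication. The paper proves the contrapositive (no free face implies the colon identity holds), staying in the monomial language throughout: after normalizing $\m$ so that $\supp(\m)=[n]\setminus\{l\}$, it uses the no-free-face hypothesis to climb from the face $\supptw(\m)$ to a facet $G$ with $l\notin G$ (if a facet $H\supseteq\supptw(\m)$ contains $l$, no-free-face yields a second facet over $H\setminus\{l\}$, which cannot contain $l$), and then exhibits the explicit element $\n=x_l\prod_{i\in G}x_i\in I_\Delta$ with $\m\n\notin I_\Delta^{[2]}$, so $\m\notin I_\Delta^{[2]}:I_\Delta$. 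You instead prove the implication directly: having compiled the combinatorial dictionary of witness pairs $(B,j)$ (your truncation of exponents to $\{0,1,2\}$ and enlargement of $C$ are both valid, and the latter mirrors the paper's normalization step), you enlarge $B$ to a maximal face avoiding $j$, and maximality together with your preliminary observation that $B\cup\{j\}\in\Delta$ forces $B$ to be a free face. The two arguments share the same combinatorial core---the paper's facet-climbing step and your maximality argument are essentially contrapositive reformulations of one another---but yours cleanly separates the algebra-to-combinatorics translation from the combinatorics, while the paper's has the virtue of producing the annihilating monomial explicitly. Two small points to fix: your direction labels are swapped (constructing a witness from a free face proves (a)$\Rightarrow$(b) by contraposition, and the maximality argument proves (b)$\Rightarrow$(a)); and your assertion that $F_0\cup\{k\}$ is a non-face deserves its one-line justification, namely that every face containing the free face $F_0$ lies inside its unique facet $F_0\cup\{i\}$, which does not contain $k$.
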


\begin{proof}
For a monomial $\m =\prod_{i=1}^n x_i^{a_i} \in S$, set
$\supp (\m) : =\{ \, i \mid a_i \ne 0 \, \}$ and
$\supptw (\m) :=\{ \, i \mid a_i \ge 2 \, \}$.
Note that  $\m \in I_\Delta^{[2]}$ if and only if $\supptw(\m) \not \in \Delta$.
Further more,  under the assumption that $\supp(\m) \ne [n]$,
we have $\m \in I_\Delta^{[2]}+(\bx^\one)$ if and only if $\supptw(\m) \not \in \Delta$.

(a) $\Rightarrow$ (b): 
By Theorem~\ref{ABZ main}, it suffices to show that  $I_{\Delta}^{[2]}:I_\Delta=I_\Delta^{[2]}+ (\bx^\one)$  implies (b), and the same is true for the proof of 
the converse implication.   

Assume that $\Delta$ does not satisfy (b).
Then we may assume that $\{ 1, 2, \ldots, l \}$ is a free face, and
it is contained in a unique facet  $\{ 1, 2, \ldots, l+1\}$. Set
$$\m:= \left( \prod_{i=1}^l x_i^2  \right) \cdot \left( \prod_{i=l+2}^n x_i \right).$$
Clearly,  $\m \not \in I_\Delta^{[2]}+(\bx^\one)$.
Take any monomial $\n \in I_\Delta$. Since  $\{ 1, 2, \ldots, l+1\} \in \Delta$,
$\n$ can be divided by $x_j$ for some $l+2 \le j \le n$. Then
$\supptw(\m\n) \supseteq \{1,2, \ldots, l, j\}$, which is not a face of $\Delta$.
It follows that  $\m\n \in I_\Delta^{[2]}$. Summing up, we have
$\m \in (I_\Delta^{[2]}:I_\Delta) \setminus I_\Delta^{[2]}  +(\bx^\one)$. Hence the condition (a) does not hold, and we are done.

(b) $\Rightarrow$ (a): Assume that the condition  (b) is satisfied.
Since   $I_{\Delta}^{[2]}:I_\Delta \supseteq I_\Delta^{[2]}+ (\bx^\one)$ always holds, it suffices to
prove that $I_\Delta^{[2]}:I_\Delta\subseteq  I_\Delta^{[2]}+ (\bx^\one)$, equivalently, $\m \not \in  I_\Delta^{[2]}+ (\bx^\one)$
implies $\m \not \in  I_\Delta^{[2]}: I_\Delta$. So take a monomial $\m \in S$ with $\m \not \in I_\Delta^{[2]}+ (\bx^\one)$.
If $\# \supp (\m) \le n-2$ and $i \not \in \supp (\m)$, then $x_i \m \not \in  I_\Delta^{[2]}+ (\bx^\one)$,  and
$x_i \m \not \in  I_\Delta^{[2]}:I_\Delta$ implies  $\m \not \in  I_\Delta^{[2]}:I_\Delta$. Hence we can replace $\m$ by $x_i \m$
in this case.
Repeating this operation, we may assume that $\# \supp(\m)=n-1$.
Let $x_l$ be the only variable which does not divide $\m$.

Set $F:= \supptw (\m)$.
Since $\m \not \in I_\Delta^{[2]}$, we have $F \in \Delta$. Moreover, there is a facet $G \in \Delta$
with $G \supseteq F$ and $l \not \in G$. To see this, take any facet $H \in \Delta$ with $H \supseteq F$.
If $l \not \in H$, then we can take $H$ as $G$.  If $l \in H$, then $H \setminus \{l\}$ is contained in
a facet $H'$ other than $H$ by the condition (b). Clearly,  we can take $H'$ as $G$.
Replacing $\m$ by $(\prod_{i \in G \setminus F} x_i) \cdot \m$, we may assume that $F=\supptw(\m)$ is a facet with  $l \not \in F$. Then $\n := x_l \cdot  \prod_{i \in F} x_i$
is contained in $I_\Delta$, since $\supp (\n)= F \cup \{ l\}$ is not a face of $\Delta$.
However,  we have $\supptw(\m \n) =F \in \Delta$ and $\m \n \not \in I_\Delta^{[2]}$.
It follows that $\m \not \in I_\Delta^{[2]}: I_\Delta$. This is what we wanted to prove.
\end{proof}

\begin{rem} \label{rem}
Under the assumption that each variable $x_i$ divides some minimal monomial generator of $I_\Delta$, equivalently
$\Delta$ is not a cone over any vertex, one may check out
that the condition on the Cartier algebra of a complete Stanley-Reisner ring $R=S/I_\Delta$ being principally generated is
a topological property of the geometric realization $X$ of $\Delta$. 
In fact, by Theorem~\ref{main this time}, $\cC(R)$ is 
not principally generated if and only if there is an open subset $U \subset X$ which is homeomorphic to $\{ \, (x_1, \ldots, x_m ) \in \RR^m \mid x_m \ge 0\, \}$ for some $m \in \NN$. However, the condition that $\Delta$ is not a cone over any vertex is {\it not}  topological. 
In this sense, being  principally  generated is not  a topological condition. 
This is quite parallel to the relation between Gorenstein and Gorenstein* properties of simplicial complexes where
we have that $\Delta$ is Gorenstein if and only if
$\Delta = \Delta' * 2^{[n]\setminus V}$ for some Gorenstein* complex $\Delta'$ on
some $V \subseteq [n]$ and Gorenstein* is a topological property
(See\footnote{The notation  
$\Delta= \opn{core} \Delta$ in  \cite[\S II.5]{Sta96} corresponds to $V=[n]$ in our notation.} \cite[\S II.5]{Sta96}). 
\end{rem}

Despite the fact that using Theorem~\ref{main this time} one may construct many simplicial complexes
satisfying that the Cartier algebra $\cC(R)$ is principally generated, e.g. triangulations of manifolds without boundary, it seems that 
there is no tight relation to any homological conditions on $R$. 
The best we can say in this direction is the following. 
For the definitions of {\it Buchsbaum* complexes} and undefined terminologies
we refer to \cite{Sta96} and \cite{AW12}.
Here we just recall the notion of {\it contrastar} of a simplicial complex $\Delta$. For a face $F \in \Delta$, set 
 $\cost_\Delta(F) := \{ G \in \Delta : G \not \supset F \}$. Clearly, this is a subcomplex of $\Delta$. 

\begin{cor}\label{Bbm*}
If $\Delta$ is Buchsbaum* (in particular, doubly Cohen-Macaulay, or Gorenstein*) over some field $K$, then $\cC(R)$ is
principally generated.
\end{cor}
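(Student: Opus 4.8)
The plan is to read Corollary~\ref{Bbm*} through Theorem~\ref{main this time}, which reduces the claim to a purely combinatorial implication: since $\cC(R)$ is principally generated precisely when $\Delta$ has no free face, it suffices to prove the contrapositive, namely that a simplicial complex carrying a free face is never Buchsbaum* over $K$. So I would fix a free face $F\in\Delta$ together with the unique facet $F\cup\{i\}$ containing it, and then locate a face at which the Buchsbaum* condition must break down.

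First I would record the local geometry at $F$. A Buchsbaum* complex is in particular Buchsbaum, hence pure; writing $d-1=\dim\Delta$, the facet $F\cup\{i\}$ has $d$ vertices and therefore $|F|=d-1$. Because $F\cup\{i\}$ is the only facet containing $F$, the only faces of $\Delta$ that contain $F$ are $F$ and $F\cup\{i\}$ themselves. Two things follow. On the one hand, $\opn{lk}_\Delta(F)=\{\emptyset,\{i\}\}$ is a single point, so it is acyclic and $\tilde H_{d-1-|F|}(\opn{lk}_\Delta(F);K)=\tilde H_{0}(\mathrm{pt};K)=0$. On the other hand, $\cost_\Delta(F)=\Delta\setminus\{F,\,F\cup\{i\}\}$ is exactly the complex obtained from $\Delta$ by the elementary collapse along $F$; since an elementary collapse preserves the homotopy type, the inclusion $\cost_\Delta(F)\hookrightarrow\Delta$ is a homotopy equivalence and $\tilde H_{j}(\Delta,\cost_\Delta(F);K)=0$ for all $j$. (The two computations are of course the same, via the standard isomorphism $\tilde H_{j}(\Delta,\cost_\Delta(F);K)\cong\tilde H_{j-|F|}(\opn{lk}_\Delta(F);K)$.)

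Next I would invoke the homological description of Buchsbaum* complexes from \cite{AW12}: a complex that is Buchsbaum* over $K$ satisfies $\tilde H_{d-1-|F|}(\opn{lk}_\Delta(F);K)\ne 0$ for every face $F\in\Delta$ (equivalently, the local homology $\tilde H_{d-1}(\Delta,\cost_\Delta(F);K)$ is nonzero at every $F$). This is the Buchsbaum analogue of Baclawski's criterion for the doubly Cohen--Macaulay property, and it is exactly what the previous paragraph contradicts at the free face $F$. Hence $\Delta$ cannot be Buchsbaum*, which is the desired contrapositive. The two parenthetical cases are then immediate: doubly Cohen--Macaulay and Gorenstein* complexes are Buchsbaum* over $K$ by \cite{AW12}, so the same conclusion applies to them. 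Finally, to apply Theorem~\ref{main this time} one needs $\Delta$ not to be a cone, and this comes for free: any cone $\{w\}*\Gamma$ has a free face, since a facet $G$ of $\Gamma$ is a free face of the cone with $i=w$; thus the absence of free faces we are proving already rules out cones.

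I expect the only real obstacle to be extracting the precise form of the Buchsbaum* characterization from \cite{AW12} --- in particular that it is the \emph{nonvanishing} of the top reduced homology of every link (including the link of $\emptyset$, i.e.\ $\tilde H_{d-1}(\Delta;K)\ne 0$) that is required, and that this holds over the fixed field $K$. Once that statement is in hand, the remaining ingredients --- purity of Buchsbaum complexes, the identification of $\opn{lk}_\Delta(F)$ with a point, and the homotopy invariance of an elementary collapse --- are all routine.
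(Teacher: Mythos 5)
Your proposal is correct and takes essentially the same route as the paper: both reduce via Theorem~\ref{main this time} to showing that a complex with a free face cannot be Buchsbaum*, and both derive the contradiction from the contrastar/local-homology criterion for Buchsbaum* complexes in \cite{AW12}, using that the only faces containing the free face $F$ are $F$ and the unique facet $G=F\cup\{i\}$. The only cosmetic differences are that the paper states the contradiction as non-surjectivity of the map $H_d(\Delta, \cost_\Delta(F);K)\to H_d(\Delta,\cost_\Delta(G);K)$ (which is $0\to K$) rather than as vanishing of the top link homology at $F$, and that it handles the cone hypothesis by noting directly that a Buchsbaum* complex is not a cone, where you instead observe that cones always have free faces.
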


\begin{proof}
Suppose that $\Delta$ is Buchsbaum* but $\cC(R)$ is not principally generated.  Since  $\Delta$ is Buchsbaum*,
$\Delta$ is not a cone over any vertex. Hence there is a free face $F$ contained in a unique facet $G$ by
Theorem~\ref{main this time}.
Clearly, $\cost_\Delta(F)=\Delta \setminus \{  F, G  \}$ and  $\cost_\Delta(G)=\Delta \setminus \{ G  \}$.
Hence, for $d=\dim \Delta$, we have $H_d(\Delta, \cost_\Delta(F) ; K) =0$ and
$H_d(\Delta, \cost_\Delta(G) ; K) =K$, and the map
$$H_d(\Delta, \cost_\Delta(F) ; K)  \longrightarrow H_d(\Delta, \cost_\Delta(G) ; K)$$ can not be surjective.
It means that $\Delta$ is not Buchsbaum* so we get a contradiction.
\end{proof}

This result together with Lemma \ref{join} allows us to give a direct proof of the fact that a complete Gorenstein Stanley-Reisner ring
$S/I_\Delta$ has a principally generated Cartier algebra since we have 
$\Delta = \Delta' * 2^{[n]\setminus V}$ for some Gorenstein* complex $\Delta'$ on
some $V \subseteq [n]$.


\begin{exmp}

\begin{itemize}
 \item [(i)] Consider the $1$-dimensional simplicial complex $\Delta$ in Figure 1 below. $\Delta$ is Cohen-Macaulay and $\cC(R)$
is principally generated, but $\Delta$ is not doubly Cohen-Macaulay so it is not Buchsbaum* as well.

\item [(ii)] Let $\Delta$ be the simplicial complex with facets $\{ 1,2, 3 \}$, $\{ 1,2, 4 \}$,
$\{ 1,3, 4 \}$, $\{ 2, 3,4 \}$, $\{1,5\}$ and $\{2,5\}$ (see Figure~2 below). Then, $\cC(R)$ is principally
generated but $\Delta$ is not pure.
\end{itemize}
\begin{figure}[htbp]
\begin{minipage}{.48\textwidth}
\begin{center}
\includegraphics[height=4cm, width=4cm]{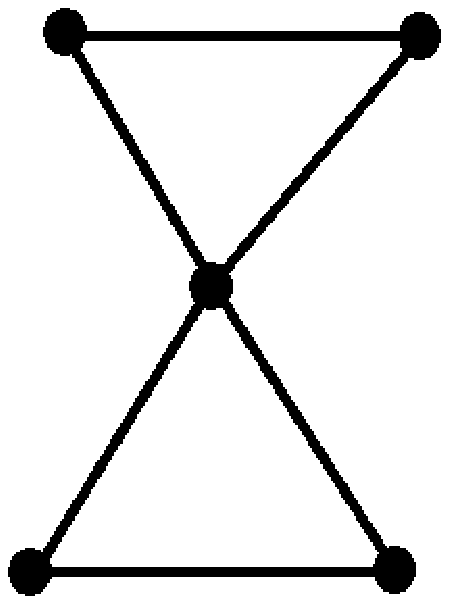}
\end{center}
\caption{}
\end{minipage}
\begin{minipage}{.48\textwidth}
\begin{center}
\includegraphics[height=4cm, width=4.3cm]{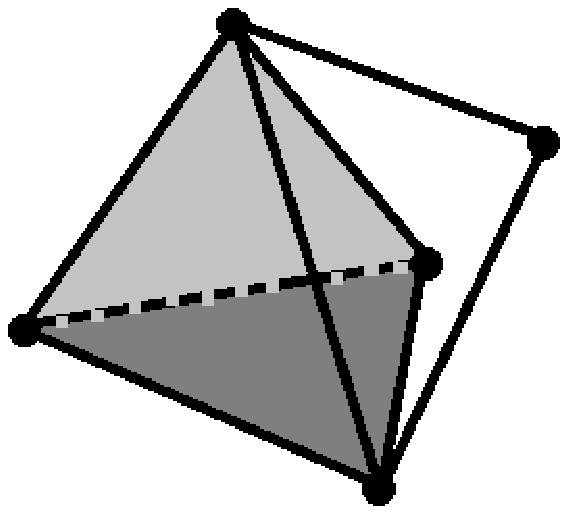}
\end{center}
\caption{}
\end{minipage}
\end{figure}
\end{exmp}

\section*{Acknowledgements}
We would like to thank A.~F.~Boix and S.~Zarzuela for many useful comments.
We also thank anonymous referee for valuable comments.

\end{document}